\definecolor{ffmblue}{HTML}{006092}
\newcounter{mparcnt}
\declaretheorem[name=Theorem,numberwithin=section]{thm}
\declaretheorem[name=Lemma,sibling=thm]{lemma}
\declaretheorem[name=Proposition,sibling=thm]{prop}
\declaretheorem[name=Definition,style=definition,sibling=thm]{defn}
\declaretheorem[name=Corollary,sibling=thm]{cor}
\numberwithin{equation}{section}
\newcommand{\ti}{\tilde}
\newcommand{\bs}{\backslash}
\newcommand{\cn}{\colon}
\newcommand{\sub}{\subset}
\newcommand{\bbN}{\mathbb{N}}
\newcommand{\bbR}{\mathbb{R}}
\newcommand{\bbS}{\mathbb{S}}
\newcommand{\bbH}{\mathbb{H}}
\newcommand{\bbE}{\mathbb{E}}
\newcommand{\8}{\infty}
\newcommand{\al}{\alpha}
\newcommand{\be}{\beta}
\newcommand{\ga}{\gamma}
\newcommand{\ka}{\kappa}
\newcommand{\la}{\lambda}
\newcommand{\si}{\sigma}
\newcommand{\Si}{\Sigma}
\newcommand{\vt}{\vartheta}
\newcommand{\Om}{\Omega}
\newcommand{\De}{\Delta}
\newcommand{\Th}{\Theta}
\newcommand{\del}{\partial}
\newcommand{\n}{\nabla}
\newcommand{\fa}{\forall}
\newcommand{\rt}{\sqrt}
\newcommand{\ip}[2]{\left\langle #1,#2 \right\rangle}
\newcommand{\fr}[2]{\frac{#1}{#2}}
\newcommand{\x}{\times}
\DeclareMathOperator{\dive}{div}
\DeclareMathOperator{\dist}{dist}
\DeclareMathOperator{\tr}{tr}
\DeclareMathOperator{\Rc}{Rc}
\DeclareMathOperator{\vol}{vol}
\DeclareMathOperator{\Ar}{Area}
\newcommand{\pf}[1]{\begin{proof}#1 \end{proof}}
\newcommand{\eq}[1]{\begin{equation}\begin{alignedat}{2} #1 \end{alignedat}\end{equation}}
\newcommand{\abs}[1]{\lvert #1\rvert}
\newcommand{\enum}[1]{\begin{enumerate}[(i)] #1 \end{enumerate}}
\newcommand{\enu}[1]{\begin{enumerate}[(a)] #1 \end{enumerate}}
\newcommand{\ra}{\rightarrow}
\newcommand{\hra}{\hookrightarrow}
\newcommand{\mt}{\mapsto}
\newcommand{\hp}{\hphantom}
\newcommand{\q}{\quad}
\begin{document}

\title[Optimisers of reverse isoperimetric problems]{Two properties of optimisers for the reverse isoperimetric problem}
\author{Deniz M. Hamdy and Julian Scheuer}
\address{\flushleft\parbox{\linewidth}{Goethe-Universit\"at\\ Institut f\"ur Mathematik\\ Robert-Mayer-Str.~10\\ 60325 Frankfurt\\ Germany\\{\href{mailto:hamdy@stud.uni-frankfurt.de
}{hamdy@stud.uni-frankfurt.de
}} \\ {\href{mailto:scheuer@math.uni-frankfurt.de}{scheuer@math.uni-frankfurt.de}}}}
\keywords{Reverse isoperimetric problem}
\date{\today}

\begin{abstract}
The reverse isoperimetric problem asks for existence and properties of bounded convex sets in a Riemannian manifold which maxi\-mise the perimeter amongst all those sets of fixed volume which roll freely in a ball of some given radius. If the boundary of the set is of class $C^{2}$, this amounts to a positive lower bound on the principal curvatures and in this class we prove that there are no $C^{2}$-maximisers of the perimeter with prescribed volume. In addition, we prove that a given possibly non-$C^{2}$ maximiser has its smallest principal curvature constant in regions where it is of class $C^{2}$. We prove this result in the Euclidean, spherical and hyperbolic space.
\end{abstract}
\maketitle


\section{Introduction}
Classical isoperimetric problems ask for existence and properties of bound\-ed domains, which minimise the perimeter amongst all sets enclosing a prescribed amount of volume.

A classical result in this regard holds for simple closed curves in the plane, which bound a domain $\Om\sub \bbR^{2}$. There holds 
\begin{equation*}
4\pi\vol(\Omega)\leq \Ar(\del\Om)^{2},
\end{equation*}
where the equality case is reserved for all balls $\Om = B_{R}$.
With the help of the Brunn-Minkowski inequality this result can be generalised to arbitrary dimension: For a bounded set in the $(n+1)$-dimensional Euclidean space with $C^{1}$-boundary (there are further different generalisations to sets with weaker regularity) we have
\begin{equation}
\frac{\Ar(\partial \Omega)}{\Ar(\partial B_{1})} \geq \left(\frac{\vol(\Omega)}{\vol(B_{1})}\right)^{n/(n+1)}.
\end{equation}
 The equality case is again reserved for balls of arbitrary radius.
Also in non-Euclidean spaces there are versions of this result and the literature is vast. We refer the interested reader to the well written survey \cite{Osserman:11/1978} and instead focus on the class of problems that is the main subject of this paper.

The reverse isoperimetric inequality instead asks the question, whether in a certain class of domains with fixed enclosed volume, the perimeter is bounded above and, if this is the case, a (possibly unique) object realises this bound. Certainly, without further constraints, such a bound can not exist. This can easily be seen from the unbounded domain enclosed by the graph of any smooth, positive and integrable function $f\cn \bbR\ra \bbR$ and the $x$-axis. Cutting this domain at $x=\pm R$ and sending $R$ to infinity, we find a sequence of domains in $\bbR^{2}$ with bounded Lebesgue measure but unbounded perimeter. 

The question becomes interesting, once we restrict the admissible class to domains the boundary of which satisfies some bounds on the principal curvatures. For this purpose, let us call a smooth, bounded and convex domain of a Riemannian manifold {\it $\la$-convex}, if all principal curvatures of the boundary are at least $\la$, while we call it {\it $\la$-concave}, if all principal curvatures are at most $\la$. In the non-smooth setting the notions can be defined using an enclosing ball property at every boundary point, see below.
In \cite{ChernovDrachTatarko:09/2019}, Chernov, Drach and Tatarko proved that in the $n$-dimensional Euclidean space with $n\geq 2$, the $\la$-concave so-called {\it sausage body}, i.e. the Minkowski sum of a line segment with a ball of radius $1/\la$, is a maximiser of the perimeter amongst all bodies with this very same volume. 
Amongst all $\la$-convex bodies in three-dimensional Euclidean space, Drach and Tatarko proved in \cite{DrachTatarko:/2023} that the $\la$-lenses, i.e.~the intersection of two spheres of radius $1/\la$, are the maximisers of the perimeter. This also holds in two dimensions, see \cite{BorisenkoDrach:/2014,FodorKurusaVigh:/2016}. 

Hence it appears that the $\la$-convex case is widely open in all dimensions greater than three. Using variational methods, in this paper we prove that we can at least rule out maximisers of class $C^{2}$. 
In addition, our techniques will also imply that at every point of the $C^{2}$-pieces of a maximiser, the smallest principal curvature must be $\lambda$. A related problem involving constant width, the so-called Blaschke-Lebesgue problem, was treated in \cite{AnciauxGuilfoyle:/2011}.
 This also applies to convex sets within the sphere or the hyperbolic space.

Here is our main result, for which we first need some notation. We write $\bbE^{n+1}$, $\bbS^{n+1}$ and $\bbH^{n+1}$ for the Euclidean, spherical and hyperbolic space of sectional curvature $0$, $1$ and $-1$ respectively.
We define $I_{\mathbb{H}^{n+1}}=(1,\infty) $,  $I_{\mathbb{E}^{n+1}}:=(0,\infty)$, $I_{\mathbb{S}^{n+1}}:=(0, \infty)$ and for $\lambda \in I_{\Si}$,
\begin{equation*}
R_{\Sigma}(\lambda) := 
\begin{cases}
1/\lambda & \Si = \mathbb{E}^{n+1} \\
\cot^{-1}(\lambda) & \Si = \mathbb{S}^{n+1} \\
\coth^{-1}(\lambda) & \Si=\mathbb{H}^{n+1}.
\end{cases}
\end{equation*}
This definition is motivated by the observation, that for $\la\in I_{\Si}$, the geodesic sphere with radius $R_{\Si}(\la)$ has constant principal curvatures $\la$. First we recall the definition of $\la$-convexity, compare \cite{DrachTatarko:/2023}.

\begin{defn}[$\lambda$-convexity]\label{defn:convexity}
Let $\la>0$.
\enum{
\item A convex set $K\sub \Si$ is {\it $\la$-convex}, if at any $p\in \del K$, a neighbourhood of $p$ within $\del K$ lies within the mean curvature side\footnote{The one the mean curvature vector points into.} of a complete, totally umbilic hypersurface of curvature $\la$, touching $\del K$ at $p$. 
\item A {\it $\la$-convex body} is a compact, $\la$-convex set with nonempty interior.
\item A smooth hypersurface $M$ of $\Si$ is called {\it strictly $\la$-convex}, if at every point in $M$ the smallest principal curvature of $M$ is larger than $\la$. 
}
In either case we call $K$ resp. $M$ {\it nontrivial}, if it is not a full side of the umbilic hypersurface resp. a piece of it.
\end{defn}

In case that $\la\in I_{\Si}$, this definition says that the set $K$ rolls freely in a ball of radius $R_{\Si}(\la)$ and the latter we will also call {\it supporting ball}, see \autoref{thm:global Blaschke} below.  However, we need to give the more general definition as above, because we want the following main result to encompass $\la>0$ also in the hyperbolic case.

With these definitions at hand, we can formulate our main result.

\begin{thm}\label{thm:main}
Let $n\geq 1$, $\la>0$ and $K$ be a nontrivial compact $\la$-convex body in $\Si=\bbE^{n+1}$, $\Si=\bbS^{n+1}$ or $\Si=\bbH^{n+1}$.
Suppose that the set $K$ has the largest perimeter among all $\la$-convex bodies with the same volume as $K$. Then the following statements are valid:
\enum{
\item $\del K$ is not of class $C^{2}$, i.e. $K$ does not have twice continuously differentiable boundary. 
\item If $\la\in I_{\Si}$, then the smallest principal curvature is $\la$ on every point of the set where $\del K$ is of class $C^{2}$.
}
\end{thm}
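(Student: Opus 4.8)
The plan is to reduce both statements to a single local computation on the smooth pieces of $\del K$ and then feed the outcome into a global rigidity argument for part (i). The engine is the following: on any open piece $U\sub\del K$ which is $C^{2}$ and on which the constraint is \emph{inactive}, i.e.\ where the smallest principal curvature satisfies $\ka_{1}>\la$, maximality of $K$ is incompatible with the second variation of area. First I would fix such a $U$ and, for $f\in C_{c}^{\8}(U)$, consider the normal variation $\Phi_{t}=\exp(tf\nu)$ with $\nu$ the outward normal, corrected to second order so that the enclosed volume stays constant. Since $\ka_{1}>\la$ strictly on $\supp f$ while $\del K$ is left unchanged elsewhere (in particular across $\del U$ and on the non-$C^{2}$ part), the perturbed sets are again compact and $\la$-convex for small $t$, so they are admissible competitors; for $\la\in I_{\Si}$ this admissibility is exactly the rolling-ball characterisation of \autoref{thm:global Blaschke}. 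Writing $A(t),V(t)$ for area and volume, maximality gives $A'(0)=0$ whenever $V'(0)=0$. As $A'(0)=\int_{U}Hf\,dA$ and $V'(0)=\int_{U}f\,dA$, where $H$ is the (unnormalised) mean curvature, this forces $H\equiv\const$ on each component of $U$.

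With $H\equiv\const$ on $U$, the constrained second variation for volume-preserving $f$ is the usual index form
\[
A''(0)=\int_{U}\br{\abs{\n f}^{2}-\br{\abs{A}^{2}+\ov{\Rc}(\nu,\nu)}f^{2}}\,dA,
\]
with $\abs{A}^{2}=\sum_{i}\ka_{i}^{2}$ and $\ov{\Rc}(\nu,\nu)$ the constant ambient Ricci curvature of the space form. Maximality forces $A''(0)\le0$ for all such $f$. But the zeroth-order potential $\abs{A}^{2}+\ov{\Rc}(\nu,\nu)$ is bounded on the compact $\del K$, whereas the Dirichlet energy is not: choosing $f$ supported in two disjoint small balls in $U$, oscillating at high frequency and carrying equal mass on the two balls (so that $\int_{U}f\,dA=0$), makes $\int_{U}\abs{\n f}^{2}\,dA$ arbitrarily large compared with $\int_{U}f^{2}\,dA$, hence $A''(0)>0$. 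This contradiction shows $U=\emptyset$: wherever $\del K$ is $C^{2}$ one has $\ka_{1}\equiv\la$, which is precisely part (ii).

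For part (i) I would argue by contradiction, assuming $\del K$ is globally $C^{2}$; the local argument then gives $\ka_{1}\equiv\la$ on all of $\del K$, and I split according to $\la$. If $\la\in I_{\Si}$, consider the inward centre map $c(p)=\exp_{p}\br{-R_{\Si}(\la)\nu(p)}$ sending $p$ to the centre of its supporting ball. Because $\la$ is exactly the principal curvature of the geodesic sphere of radius $R_{\Si}(\la)$, the contact of $\del K$ with that sphere is to second order in the $\ka_{1}$-direction, so the differential of $c$ degenerates in that direction at every point; thus $c$ has rank $\le n-1$ everywhere and its image $C$ is a convex set of dimension $\le n-1$, and $K$ is the $R_{\Si}(\la)$-tube around $C$. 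If $\dim C\ge1$, the boundary of this tube is totally geodesic along a direction tangent to $C$, forcing $\ka_{1}=0<\la$ there, a contradiction; hence $C$ is a point and $K$ is the supporting ball, i.e.\ trivial, against hypothesis. If instead $\la\le1$ in $\bbH^{n+1}$ (so $\la\notin I_{\Si}$), the smallest enclosing geodesic ball $B_{r}$ touches $\del K$ at some $p$ with $\ka_{1}(p)\ge\coth(r)>1\ge\la$, contradicting $\ka_{1}\equiv\la$. In every case $\del K$ cannot be $C^{2}$.

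The hard part will be the careful justification that the high-frequency, volume-fixing perturbations are genuinely admissible: for each test function one must exhibit an actual family of compact $\la$-convex bodies of exactly the right volume realising the prescribed normal speed, and check that the second-order volume correction neither spoils the index-form identity nor violates the curvature bound near $\del U$. The other delicate point is the rigidity in part (i): promoting the pointwise rank drop of the centre map to the statement that $K$ is a tube over a lower-dimensional convex set, and locating a genuinely totally geodesic boundary direction, requires care on the locus where $\ka_{1}=\ka_{2}$ and with the spherical and hyperbolic focal geometry. By contrast, the second-variation sign and the high-frequency trick are robust and are what really drive both conclusions.
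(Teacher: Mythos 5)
Your local variational engine is essentially the paper's argument (Proposition~\ref{prop:main}): on a $C^{2}$ piece where $\ka_{1}>\la$ strictly, two-sided volume-preserving perturbations remain admissible, the first variation forces $H$ constant, and then a volume-preserving test function with positive index form contradicts maximality. Your way of getting positivity of $-\int fT(f)$ — shrinking the support so that the Dirichlet energy dominates the bounded potential $\tr(A^{2})+\Rc_{\Si}(\nu,\nu)$ — is more elementary than the paper's route (which invokes strong stability via a Killing field transverse to $\nu$, Propositions~\ref{ElbertNelli} and~\ref{prop:strongly stable}), and it suffices, since one only needs a single variation with positive second variation. This part, and hence your proof of (ii), is sound modulo the standard bookkeeping you flag (the $aH$ correction term cancels against the second variation of volume precisely because $H$ is constant, as in Proposition~\ref{prop:ev eq}).

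The genuine gap is in part (i), in the rigidity step ``$\del K$ globally $C^{2}$ with $\ka_{1}\equiv\la$ implies $K$ is a ball of radius $R_{\Si}(\la)$''. Your centre map $c(p)$ correctly degenerates in the $\ka_{1}$-direction, but the conclusion that $K$ is the \emph{tube} ($=$ union of balls of radius $R_{\Si}(\la)$) around $C$ is wrong: $\la$-convexity places $K$ \emph{inside} each supporting ball, so $K\sub\bigcap_{c\in C}\bar B_{R_{\Si}(\la)}(c)$, an \emph{intersection}, not a union. The tube over a segment is the sausage body, which is $\la$-concave, not $\la$-convex; so the ``totally geodesic direction tangent to $C$ forces $\ka_{1}=0$'' step is aimed at the wrong object and does not apply to $K$. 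Moreover the claim that the image $C$ is a convex set of a well-defined dimension $\le n-1$ is unjustified (a rank-deficient smooth map can have a badly behaved image, and the multiplicity of $\ka_{1}$ may jump). The correct completion is exactly what the paper supplies in place of your tube argument: if $C$ contains two distinct centres, $K$ lies in a nontrivial $\la$-lens $\bar B^{p}\cap\bar B^{q}$; a nontrivial lens lies in a ball of radius strictly less than $R_{\Si}(\la)$ (the convexity computation for $\Th(r(\ga(t)))$ along the geodesic joining the centres); and shrinking an enclosing sphere onto $\del K$ then produces a point with $\ka_{1}>\la$, contradicting $\ka_{1}\equiv\la$. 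Without this (or an equivalent) lemma, your part (i) does not close; with it, your part (i) reduces to the same dichotomy the paper uses, and the detour through $\ka_{1}\equiv\la$ becomes unnecessary, since the existence of a strictly $\la$-convex point feeds directly into the variational argument.
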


The difficulty of the proof lies in the non-openness of the admissible set, which necessarily allows that the smallest principal curvature attains the value $\la$. Hence an ad hoc variation will leave this class and indeed, so far we have not found a variation which fixes volume, increases the perimeter and at the same time respects the admissible class. Finding such a variation is still an interesting open question. Instead, our strategy combines geometrical and variational methods. To prove (i), we will show that every $\la$-convex body, which is not a round sphere, must lie in a non-trivial lens. The latter then can be shown to lie in a sphere of radius less than $R_{\Si}(\la)$, and by a standard argument contains a point, where all curvatures are strictly greater than $\la$. This allows a local variational argument, using the first and second variational formula. To prove (ii), we simply perform the same local variation argument in the supposed smooth region.

\subsection*{Acknowledgment}
This is the version accepted by the Proceedings of the American Mathematical Society.
We would like to thank the anonymous referees for the in depth checking of the manuscript and many valuable suggestions.

\section{Basic notions and auxiliary results}\label{sec:Basics}

\subsection*{Conventions}
In this paper we deal with oriented hypersurfaces $M$ embedded into one of the simply connected spaceforms of constant sectional curvature, which we subsume under the symbol $(\Si,\ip{\cdot}{\cdot})$. 
To fix notation, suppose
\eq{(M,g,\n)\hra (\Si,\ip{\cdot}{\cdot},D)}
to be isometrically immersed with the respective Levi-Civita connections deduced from the metrics. For vector fields $V$ and $W$ on $M$, we use the common abuse of notation that the quantity $D_{V}W$ tacitly assumes local extensions of $V$ and $W$ to the ambient space.

For a given unit normal field $\nu$ along $M$, $h$ denotes its scalar valued second fundamental form,
\eq{
h(V,W)(p)=\ip{D_V W}{\nu},
}
and with $A$ we denote the associated Weingarten map with eigenvalues $\ka_{1}\leq \dots\leq\ka_{n}$. We write $H=\tr(A)$ for the mean curvature.
For brevity, we use Einstein's summation convention and also write
\eq{F_{,i} = \del_{i}F}
for (vector valued) functions and
\eq{T_{;i}:=\nabla_{\partial_{i}}T.}
for tensor fields on $M$.

\subsection*{Variations}
We collect the required facts about variations of hypersurfaces. Most of these results of course do not need $\Si$ to be a spaceform.

\begin{defn}[Variation and normal Variation]
 Let $M \sub\Si^{n+1}$ be a $C^{k}$-hypersurface for some $k\in \bbN\cup\{0,\8\}$ and $J \sub \mathbb{R}$ be an interval containing zero. \enu{
 \item A {\it variation of $M$} is a map $\phi \in C^{k}(M \times J, \Si)$ with the properties
 \enum{
\item $\phi_t:=\phi(\cdot,t)$ is an embedding of a $C^{k}$-hypersurface $M_{t}$ for all $t\in J$,
\item $\phi_0(x) = x $ for all $x\in M$,
}
\item In case $k\geq 1$, a variation is called {\it normal}, if for all $p\in M$,
\eq{\partial_t \phi(p,0) \in N_pM,}
the latter being the normal space of $M$ at $p$. 
 For a given unit normal field $\nu$ on $M$ and its evolutions $\nu(\cdot,t)$ on $M_{t}$ we call the function $v_\phi$, defined by
\eq{
v_{\phi}(p,t)=\ip{{\partial_t \phi}(p,t)}{\nu(p,t)},
}
the \textit{normal velocity} of $\phi$.
\item In case $k\geq 2$,
the function $a_{\phi}$ defined by
\eq{
a_{\phi}(p,t)=\ip{D_t \partial_t\phi(p,t)}{\nu(p,t)},
}
is called the \textit{normal acceleration} of $\phi$. Here $D_{t}$ is the covariant derivative along the curve $\phi(p,\cdot)$.
\item 
We say, a variation $\phi \in C^{0}(M \times J,\Si)$ has \textit{compact support in an open set $\Om\sub M$}, if $\phi$ is a variation of $\Om$ and $\phi_{t}(x) =x$ for all $t\in J$ and all $x$ which lie outside a compact subset of $\Om$.
}
\end{defn}

Here are our required variational formulae. The assumptions are, for comprehensibility, adjusted to our setting and are by no means the most general possible.  

\begin{prop}\label{prop:ev eq}
Let $\phi$ be a variation of the compact hypersurface $M = \del K$, where $K$ is a convex body, i.e. a compact convex set with nonempty interior. Suppose $\Om\sub M$ is open and of class $C^{2}$. Let $\nu$ be the inward unit normal field on $\Om$. Then there hold for all variations $\phi$ of $M$ with compact support in $\Om$, 
\eq{\fr{d}{dt}\vol(K_{t}) = -\int_{M_{t}}v_{\phi},}
\eq{
\frac{d}{dt} \Ar(M_t)=-\int_{M_{t}} v_{\phi}H.
}
In case that $\phi$ is normal, we obtain
\eq{\fr{d^{2}}{dt^{2}}_{|t=0}\vol(K_{t}) = \int_{M_{t}}(v^{2}_{\phi}H - a_{\phi})}
and, if $\Om$ is additionally of class $C^{3}$,
\eq{
\frac{d^2}{dt^2}_{|t=0} \Ar(M_t)=\int_{M_{t}} (-v_{\phi}T(v_{\phi})+H^2v_{\phi}^2-a_{\phi}H),
}
where
\eq{
T:=\Delta +\Rc_{\Si}(\nu,\nu)+\tr(A^2)
}
denotes the stability operator and $\Rc_{\Si}$ is the Ricci tensor of the Riemannian manifold $\Si$.
\end{prop}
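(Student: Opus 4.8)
The plan is to derive all four identities from the first variation of the induced area element, differentiating once more for the two second-order formulae. Write $X=\del_t\phi$ for the variation field and decompose it along $\Om$ into tangential and normal parts, $X = X^{\top} + v_{\phi}\nu$. Differentiating the induced metric $g_{ij}(t)=\ip{\phi_{,i}}{\phi_{,j}}$ and using that $D$ is torsion-free together with $\ip{D_i\nu}{\phi_{,j}}=-h_{ij}$, I obtain $\del_t g_{ij}=\n_i X^{\top}_{j}+\n_j X^{\top}_{i}-2v_{\phi}h_{ij}$, whence tracing gives $\del_t\, d\mu_t = \br{\dive_{M_t} X^{\top}-v_{\phi}H}\,d\mu_t$. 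Integrating over $M_t$ and discarding the divergence term by the divergence theorem (legitimate since $\phi$ has compact support in the $C^2$-set $\Om$) yields the first area formula, valid for all $t$. The first volume formula follows from the transport theorem: extending $X$ to a vector field on a neighbourhood of $K_t$, one has $\fr{d}{dt}\vol(K_t)=\int_{K_t}\dive_{\Si} X=\int_{M_t}\ip{X}{N}$ with $N$ the outward normal; since $\nu$ is inward, $N=-\nu$ and the integrand is $-v_{\phi}$.

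For the two second-order identities I differentiate the respective first variations and evaluate at $t=0$, where normality gives $X^{\top}=0$, so that $\del_t\, d\mu_t\big|_{t=0}=-v_{\phi}H\,d\mu$. I first record that $\del_t v_{\phi}=\ip{D_t\del_t\phi}{\nu}+\ip{\del_t\phi}{D_t\nu}$; at $t=0$ the field $D_t\nu$ is tangential while $\del_t\phi=v_{\phi}\nu$ is normal, so the second term drops and $\del_t v_{\phi}\big|_{t=0}=a_{\phi}$. Differentiating $\fr{d}{dt}\vol(K_t)=-\int_{M_t}v_{\phi}$ and inserting these two facts gives $\int_{M}\br{v_{\phi}^2 H-a_{\phi}}$, which is the second volume formula.

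The second area formula is the delicate one. Differentiating $\fr{d}{dt}\Ar(M_t)=-\int_{M_t}v_{\phi}H$ and again using $X^{\top}=0$ at $t=0$ produces $-\int_{M}\br{a_{\phi}H+v_{\phi}\,\del_t H-v_{\phi}^2 H^2}$, so everything reduces to the evolution equation for the mean curvature under a normal variation, namely $\del_t H\big|_{t=0}=\De v_{\phi}+\br{\tr(A^2)+\Rc_{\Si}(\nu,\nu)}v_{\phi}=T(v_{\phi})$. Once this is in hand, the three terms reassemble into $\int_{M}\br{-v_{\phi}T(v_{\phi})+H^2 v_{\phi}^2-a_{\phi}H}$ as claimed.

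The main obstacle is precisely the derivation of this evolution equation for $H$, which is where the ambient geometry enters. One differentiates $h_{ij}=\ip{D_i\phi_{,j}}{\nu}$ in $t$, commutes $D_t$ past the two spatial covariant derivatives — which introduces an ambient curvature term via the definition of $\Rm_{\Si}$ — and uses the Gauss and Codazzi relations to organise the result; tracing against $g^{ij}$ and accounting for $\del_t g^{ij}$ then assembles $\del_t H=\del_t\br{g^{ij}h_{ij}}$ into the stated form, the spaceform hypothesis collapsing the curvature contribution to the single term $\Rc_{\Si}(\nu,\nu)$. The regularity bookkeeping is the secondary nuisance: the second area formula differentiates the $C^2$-quantity $H$ in $t$, which is why $\Om$ must be of class $C^3$, whereas the second volume formula sees only the normal acceleration and hence needs merely $C^2$. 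The sign conventions (inward $\nu$ and $h(V,W)=\ip{D_V W}{\nu}$) must be tracked throughout; a quick check on a shrinking round geodesic sphere confirms the orientation of $\del_t H=T(v_{\phi})$.
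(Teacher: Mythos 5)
Your proposal is correct and follows essentially the same route as the paper: first variation of the induced metric plus the divergence theorem for the area formula, the identity $\del_t v_{\phi}|_{t=0}=a_{\phi}$ for the second volume variation, and the evolution equation $\del_t H|_{t=0}=T(v_{\phi})$ (which the paper likewise only cites, from Gerhardt) for the second area variation. The only divergence is cosmetic: you derive the first volume formula from the ambient transport/divergence theorem, whereas the paper uses a polar-coordinate representation of $\vol(K_t)$; both are standard and equivalent.
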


\pf{
The first formula can be deduced by differentiating the relation
\eq{\vol(K_{t}) = \int_{0}^{\8}\int_{\bbS^{n}}\mathds{1}_{K_{t}}(r\xi)\vt^{n}(r)\,d\xi dr,}
where $\mathds{1}$ is the indicator function and
\eq{\label{pf:ev eq}\vt(r) = \begin{cases}r, & \Si = \bbE^{n+1}\\
					\sin(r), &\Si = \bbS^{n+1}\\
					\sinh(r), &\Si = \bbH^{n+1}.\end{cases}}
 For the others we display the main ingredients of their proofs for completeness. For brevity write $v = v_{\phi}$ and $a=a_{\phi}$. 
The induced metric satisfies
\eq{\del_{t}\ip{\phi_{,i}}{\phi_{,j}} &= \ip{(\del_{t}\phi)_{;i}}{\phi_{,j}} + \ip{\phi_{,i}}{(\del_{t}\phi)_{;j}}\\
				&=-2vh_{ij}+\ip{(\del_{t}\phi)^{\top}_{;i}}{\phi_{,j}} + \ip{\phi_{,i}}{(\del_{t}\phi)^{\top}_{;j}}
				}
and hence for the area element we get
\eq{\del_{t}\rt{\det g_{ij}} = -vH + \dive_{M_{t}}(\del_{t}\phi)^{\top}.}
This proves the second formula using the divergence theorem on $\Om_{t}$.
The normal velocity evolves according to
\eq{\del_{t}v_{|t=0} = a + \ip{\del_{t}\phi}{D_{t}\nu} = a-\ip{\del_{t}\phi}{\n v}}
and hence we obtain
\eq{\fr{d^{2}}{dt^{2}}_{|t=0}\vol(K_{t}) = -\fr{d}{dt}_{|t=0}\int_{M_{t}}v=\int_{M}(v^{2}H - a),  }
where we used that at $t=0$, $\del_{t}\phi$ is normal.
Finally, we use a computation similar to the proof of \cite[Lemma~2.3.3]{Gerhardt:/2006} and evaluate at $t=0$ to obtain
\eq{\del_{t}H_{|t=0} = \De v + \tr(A^{2})v + \Rc_{\Si}(\nu,\nu)v,}
and compute further,
\eq{\fr{d^{2}}{dt^{2}}_{|t=0}\Ar(M_{t}) &= \int_{M}v^{2}H^{2}-\int_{M}aH - \int_{M}vT(v).
			}
			}

\subsection*{Stability}

Later we have to use the second variation in order to perturb our hypersurfaces, so that the notion of stability naturally comes into play, which we briefly recall here.

\begin{defn}\label{defn:stability}
A hypersurface of constant mean curvature $M\sub \Si$ is called {\it strongly stable}, if for all open and connected sets $\Om\sub M$ and all not identically vanishing $ f\in C^{\8}_{c}(\Om)$ with the property $\int_{M}f = 0$, there holds
\eq{-\int_{\Om}fT(f)>0.}
Here $C^{\8}_{c}(\Om)$ denotes the set of smooth functions on $M$, which have compact support in $\Om$.
\end{defn}

In order to check stability in practice the following lemma is of high value. The idea is taken from the proof of \cite[Prop.~2.5]{ElbertNelli:08/2024}.

\begin{prop}
A hypersurface of constant mean curvature $M\sub \Si$ is strongly stable, if there exists a smooth positive function $u$ with the property $T(u)\leq 0$.
\end{prop}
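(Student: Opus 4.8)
The plan is to use the classical device of Fischer-Colbrie and Schoen, which is exactly the idea attributed to \cite{ElbertNelli:08/2024}: a positive solution $u$ of $T(u)\le 0$ lets us ``conjugate away'' the potential in the stability operator. Concretely, fix an open connected $\Om\sub M$ and a not identically vanishing $f\in C^{\8}_{c}(\Om)$ with $\int_{M}f=0$. Since $u>0$ everywhere, the function $w:=f/u$ is again smooth with compact support in $\Om$. Writing $T=\De+q$ with $q:=\Rc_{\Si}(\nu,\nu)+\tr(A^{2})$ and applying the product rule, I would first record
\[
T(f)=T(uw)=wT(u)+2\ip{\n u}{\n w}+u\De w.
\]

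Next I would insert this into $-\int_{M}fT(f)$ and integrate by parts. Using $f=uw$ together with the identity
\[
-\int_{M}u^{2}w\,\De w=\int_{M}u^{2}\abs{\n w}^{2}+2\int_{M}uw\ip{\n u}{\n w},
\]
which holds without boundary terms because $w$ has compact support in $\Om$, the two cross terms cancel and one arrives at the key identity
\[
-\int_{M}fT(f)=-\int_{M}uw^{2}T(u)+\int_{M}u^{2}\abs{\n w}^{2}.
\]
Both terms on the right are nonnegative: the first because $u>0$, $w^{2}\ge 0$ and $T(u)\le 0$, the second trivially. Hence $-\int_{\Om}fT(f)\ge 0$, which already yields the non-strict inequality.

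The remaining point is strictness. If $-\int_{\Om}fT(f)=0$, then in particular $\int_{M}u^{2}\abs{\n w}^{2}=0$, and since $u>0$ this forces $\n w\equiv 0$, so $w$ is constant on the connected set $\Om$. Here I would split into two cases. If $\Om\neq M$, then $w$ is a constant with compact support in $\Om$, so it must vanish near $M\setminus\Om$, whence $w\equiv 0$ and $f\equiv 0$. If $\Om=M$ (which can only occur when $M$ is a compact component), then $f=cu$ for a constant $c$, and the constraint $0=\int_{M}f=c\int_{M}u$ together with $\int_{M}u>0$ again forces $c=0$, i.e.\ $f\equiv 0$. In either case this contradicts the choice of $f$, so in fact $-\int_{\Om}fT(f)>0$, which is precisely strong stability.

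I do not anticipate a genuine obstacle; the computation is a short integration by parts. The only place requiring care is the strictness step, where the hypothesis $\int_{M}f=0$ is genuinely needed to exclude the case $\Om=M$ on a closed hypersurface: without it, $f=cu$ would be an admissible nonzero test function for which the quadratic form could vanish, so the constant-mean-curvature condition (entering through $\int_{M}f=0$) is exactly what is used to close the argument.
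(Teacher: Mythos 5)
Your proof is correct and follows essentially the same route as the paper: substitute $f=u\,(f/u)$, integrate by parts so the cross terms cancel, and reduce the quadratic form to $-\int u w^{2}T(u)+\int u^{2}\abs{\n w}^{2}$, with strictness forced because a constant $w=f/u$ would give $f$ a fixed sign, contradicting $\int_{M}f=0$ (or compact support). The only cosmetic difference is that you record the exact identity before invoking $T(u)\le 0$, whereas the paper inserts the inequality mid-computation.
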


\begin{proof}
Let $f$ and $\Om$ be as in \autoref{defn:stability} and
define $\rho:=\frac{f}{u}$. Let $\nu$ be a unit normal field on $\Om$. Then we have
\begin{equation*}
\begin{aligned}
-\int_\Omega fT(f) &=-\int_{\Omega} (f\Delta f+\tr(A^2)f^2+\Rc_{\Si}(\nu,\nu)f^2) \\
&= - \int_{\Omega}\rho u(u\Delta \rho  +2g(\n \rho,\n u)+\rho \Delta u+\rho u\tr(A^2)\\
 &\hp{=-\int_{\Om}\rho u((}+\rho u\Rc_{\Si}(\nu,\nu))\\
&=  - \int_{\Omega}(\rho u\left(u\Delta \rho  +2g(\n \rho,\n u)\right)+\rho^2uT(u))\\
&\geq -\int_\Omega (\rho u^2\Delta \rho+2\rho ug(\n \rho,\n u))\\
&=\int_\Omega (g(\n(u^2\rho),\n \rho)-2 \rho ug(\n \rho, \n u))\\
&=\int_\Omega u^2 g(\n \rho,\n \rho)>0,
\end{aligned} 	
\end{equation*}
where for the second to last equality we used $\rho$ to have compact support, and for the strictness of the last inequality, we use that $\rho$ can not be constant on $\Om$, for otherwise $f$ would need a fixed sign, which is impossible due to the integral condition on $f$.
\end{proof}

The sufficient condition from the previous proposition is satisfied in a particular situation, which will be valid in our setting.

\begin{prop}{\cite[Prop~4.4]{ElbertNelli:08/2024}}\label{ElbertNelli}
Suppose a hypersurface $M\sub \Si$ of constant mean curvature with unit normal field $\nu$ satisfies $\ip{\nu}{X}>0$ for some Killing field $X$ of $\Si$, then $M$ is strongly stable.
\end{prop}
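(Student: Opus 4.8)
The plan is to invoke the preceding proposition with the \emph{support function} $u:=\ip{X}{\nu}$ of the Killing field $X$. By hypothesis $u$ is smooth and strictly positive on $M$, so it suffices to verify $T(u)\leq 0$; in fact I will show that $u$ is a Jacobi field, $T(u)=0$, which is more than enough.

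To identify $T(u)$, consider the one-parameter group of isometries $\psi_{t}$ of $\Si$ generated by the flow of $X$ and set $\phi(\cdot,t):=\psi_{t}|_{M}$. This is a variation of $M$ with variation field $X$ at $t=0$, so its normal velocity is $v_{\phi}=\ip{X}{\nu}=u$. Since every $\psi_{t}$ is an ambient isometry, each $M_{t}=\psi_{t}(M)$ carries the same constant mean curvature as $M$; hence the mean curvature pulled back along $\phi_{t}$ does not depend on $t$, and $\del_{t}H_{|t=0}=0$.

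It remains to relate $\del_{t}H_{|t=0}$ to $T(u)$. For a general (not necessarily normal) variation with variation field $X=u\nu+X^{\top}$ the linearised mean curvature splits as
\eq{\del_{t}H_{|t=0}=T(u)+\ip{\n H}{X^{\top}},}
where the normal part reproduces the formula $\del_{t}H_{|t=0}=T(v_{\phi})$ of \autoref{prop:ev eq} and the tangential part contributes only the reparametrisation term $\ip{\n H}{X^{\top}}$. As $M$ has constant mean curvature we have $\n H=0$, so this term drops out and $T(u)=\del_{t}H_{|t=0}=0$. The preceding proposition, applied to the positive function $u$, then shows that $M$ is strongly stable.

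The step that needs care is the reduction to a normal variation: $X$ is in general not orthogonal to $M$, and one must justify that its tangential part $X^{\top}$ affects $\del_{t}H$ only through the term $\ip{\n H}{X^{\top}}$, which vanishes precisely because of the constant mean curvature hypothesis. The clean way to do this is to compose $\phi$ with a $t$-dependent family of diffeomorphisms of $M$ so as to make the variation normal with the same normal velocity $u$; then \autoref{prop:ev eq} applies verbatim. A computation-based alternative avoids variations entirely: differentiating $u=\ip{X}{\nu}$ twice and using the Killing identities $\ip{D_{Y}X}{Z}=-\ip{D_{Z}X}{Y}$ and $D_{Y}D_{Z}X=\Rm(X,Y)Z$ together with the Gauss and Codazzi equations yields $\De u+\br{\tr(A^{2})+\Rc_{\Si}(\nu,\nu)}u=0$ directly. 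Either route requires only standard Riemannian bookkeeping once the constancy of $H$ is used to kill the tangential contribution.
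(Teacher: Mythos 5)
Your proof is correct and follows exactly the route the paper intends: the paper itself gives no proof of this proposition (it is quoted from Elbert--Nelli), but the preceding proposition on the criterion $T(u)\leq 0$ for a positive $u$ is set up precisely so that one applies it to the support function $u=\ip{X}{\nu}$, which is a positive Jacobi function ($T(u)=0$) on a CMC hypersurface. Your handling of the tangential part of $X$ via the $\ip{\n H}{X^{\top}}$ term, which vanishes by constancy of $H$, is the standard and correct way to justify this.
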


Finally, we will make use of a Blaschke-type rolling ball result, a smooth version of which appeared in \cite{Drach:05/2026} and which is the global analogue of the local supporting ball property, see \cite[Lemma~6]{BorisenkoDrach:/2013} and \cite[Thm.~2.2]{DrachTatarko:/2023}.

\begin{thm}\label{thm:global Blaschke}
Let $K \sub \Sigma$ be a $\la$-convex body, where $\la\in I_{\Si}$. Then $K\sub \bar B_{R_{\Si}(\la)}$ for every supporting ball $B_{R_{\Si}(\la)}$.  
\end{thm}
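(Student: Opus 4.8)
The plan is to prove that the \emph{local} supporting ball property encoded in \autoref{defn:convexity} globalises. Since $\la\in I_{\Si}$, a geodesic sphere of radius $R_{\Si}(\la)$ has all principal curvatures equal to $\la$, so the totally umbilic hypersurface of curvature $\la$ touching $\del K$ at a point $p$ is such a geodesic sphere, and the mean curvature side into which its curvature vector points is the closed geodesic ball $\bar B_{R_{\Si}(\la)}(c_{p})$ that it bounds. Thus $\la$-convexity says precisely that at each $p\in\del K$ a neighbourhood of $p$ in $\del K$ lies in some closed ball $\bar B_{R}(c_{p})$ with $R=R_{\Si}(\la)$, tangent to $\del K$ at $p$; this is an a priori only local supporting ball. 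In the spherical case $R=\cot^{-1}(\la)<\pi/2$, so all such balls, and hence $K$ itself, lie in an open hemisphere, which keeps the relevant geodesic balls convex. The theorem asks us to upgrade ``a neighbourhood of $p$ lies in $\bar B_{R}(c_{p})$'' to ``$K\sub\bar B_{R}(c_{p})$'', for every supporting ball.

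The route I would take is approximation combined with the smooth rolling theorem of \cite{Drach:04/2024}. I would approximate $K$ from inside by smooth, \emph{strictly} $\la$-convex bodies $K_{j}\ra K$ in the Hausdorff sense (for instance by a slight inner parallel, which increases the principal curvatures and so pushes the smallest one strictly above $\la$, followed by a smoothing that preserves strict $\la$-convexity). For such $K_{j}$ the outward normal, and hence the supporting ball $\bar B_{R}(c_{p})$, is unique at every boundary point and varies continuously, and the cited smooth result gives global containment $K_{j}\sub\bar B_{R}(c_{p})$ at every $p\in\del K_{j}$. Given a prescribed supporting ball $\bar B_{R}(c)$ of $K$, tangent at $p_{0}$ and selected by an outward normal in the normal cone of $K$ at $p_{0}$, I would choose $p_{j}\in\del K_{j}$ whose outward normals converge to this normal, so that $c_{p_{j}}\ra c$; passing to the limit then yields $K=\lim_{j}K_{j}\sub\lim_{j}\bar B_{R}(c_{p_{j}})=\bar B_{R}(c)$, since containments are stable under Hausdorff limits of the bodies and of the balls.

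The step I expect to be the main obstacle is exactly this approximation-and-matching, rather than the limiting argument itself. A self-contained connectedness argument on $\del K$ -- declaring $p$ ``good'' if some supporting ball at $p$ contains all of $K$, proving the good set closed by semicontinuity, nonempty via the smallest enclosing ball (whose contact point's centre lies on the same inward normal ray at distance $\rho\le R$, forcing $\bar B_{\rho}(o)\sub\bar B_{R}(c_{p^{*}})$) -- breaks down at the openness step: if the global ball at $p$ also touches $\del K$ at a distant second point $q$, then sliding the tangency to a nearby $p'$ moves the centre and may let the supporting ball at $p'$ cut into $K$ near $q$. Multi-point contact is genuine here, as the conjectured lens maximisers already possess an edge, where the normal cone is non-trivial and the supporting ball is non-unique; the whole difficulty therefore migrates into constructing inner approximants that are simultaneously smooth, strictly $\la$-convex and Hausdorff-convergent to $K$, and into realising a \emph{given} edge-supporting ball of $K$ as a limit of (unique) supporting balls of the $K_{j}$. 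One must finally verify that the argument of \cite{Drach:04/2024}, together with the enclosing-ball comparison $\rho\le R$ that it rests on, is available uniformly in all three spaceforms $\bbE^{n+1}$, $\bbS^{n+1}$ and $\bbH^{n+1}$; in the spherical case the hemisphere confinement noted above is precisely what legitimises that comparison.
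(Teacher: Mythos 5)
The paper does not prove this statement at all: Theorem \ref{thm:global Blaschke} is imported from the literature, with the smooth case attributed to \cite{Drach:04/2024} and the local-to-global statement to \cite[Lemma~6]{BorisenkoDrach:/2013} and \cite[Thm.~2.2]{DrachTatarko:/2023}. So there is no internal argument to measure your proposal against; the only question is whether your sketch actually closes the gap between the cited smooth result and the stated non-smooth one, and as written it does not. You have correctly located the two places where the content lies, but you leave both open. First, the approximation: an inner parallel body of a $\la$-convex body does not remove edges (the inner parallel of a lens is a smaller lens), so the ``slight inner parallel followed by a smoothing that preserves strict $\la$-convexity'' is precisely the construction that needs to be supplied. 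The natural candidate $(K_{-t})_{+s}$ with $s<t$ is only $C^{1,1}$, which may not meet the regularity hypotheses of \cite{Drach:04/2024}, and a genuine $C^{2}$ or $C^{\infty}$ smoothing that provably keeps all principal curvatures above $\la$ \emph{and} converges to $K$ in Hausdorff distance is not routine, especially in $\bbS^{n+1}$ and $\bbH^{n+1}$ where Minkowski-type operations are unavailable. Second, the matching: realising an arbitrarily prescribed supporting ball at a point with nontrivial normal cone as a limit of the (unique) supporting balls of the approximants requires knowing that $K$ is convex, indeed strictly convex, so that each outward direction is attained at a unique boundary point --- but convexity of $K$ is itself not part of Definition \ref{defn:convexity} and would have to be established first from the purely local enclosing-ball hypothesis.

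A smaller point in the limiting step: if the approximants $K_{j}$ are \emph{strictly} $\la$-convex, say $\la_{j}$-convex with $\la_{j}>\la$, then \cite{Drach:04/2024} encloses $K_{j}$ in balls of radius $R_{\Si}(\la_{j})<R_{\Si}(\la)$, and you must still argue that these sit inside the prescribed ball $\bar B_{R_{\Si}(\la)}(c)$; this works because a ball of smaller radius internally tangent at $p_{j}$ along the normal is contained in the larger one, but it should be said. In short, your outline is a reasonable reduction to the smooth theorem, and your self-diagnosis of the obstacles is accurate, but the proposal is a plan with its hardest steps acknowledged rather than executed; the paper itself sidesteps all of this by citing \cite[Thm.~2.2]{DrachTatarko:/2023}, which handles the non-smooth setting directly.
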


\section{Proof of the theorem}

Now we combine our auxiliary results from \autoref{sec:Basics} to give a proof of the main result through some further lemmata.

\begin{lemma}
Let $K\sub \Si$ be a non-trivial $\la$-convex body, where $\la\in I_{\Si}$. Then $K$ lies in a nontrivial $\la$-lens, i.e. the intersection of two balls $\bar B_{R_{\Si}(\la)}$ with different centres.  
\end{lemma}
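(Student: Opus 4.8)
The plan is to produce the two balls defining the lens as supporting balls of $K$, using the global rolling result \autoref{thm:global Blaschke} together with the nontriviality hypothesis. Since $\la\in I_\Si$, the complete totally umbilic hypersurfaces of curvature $\la$ appearing in \autoref{defn:convexity} are precisely the geodesic spheres of radius $R_\Si(\la)$; hence at every $p\in\del K$ the $\la$-convexity furnishes a supporting ball $B_{R_\Si(\la)}$ touching $\del K$ at $p$, that is, with $p\in\del B_{R_\Si(\la)}$, and by \autoref{thm:global Blaschke} each such ball contains $K$.

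First I would fix one supporting ball $B_1$, so $K\sub\bar B_1$. Nontriviality of $K$ means $K\neq\bar B_1$. The key step is then to locate a boundary point of $K$ lying in the \emph{open} ball $B_1$. I would argue by contradiction: if $\del K\sub\del B_1$, then, since $\del K$ is a compact topological $n$-manifold and $\del B_1$ is a connected $n$-manifold, invariance of domain shows $\del K$ is open in $\del B_1$; being also closed and nonempty, $\del K=\del B_1$, whence $K=\bar B_1$, contradicting nontriviality. Therefore there exists $q\in\del K$ with $q\in B_1$.

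Next I would invoke $\la$-convexity at the point $q$ to obtain a second supporting ball $B_2$ touching $\del K$ at $q$, so that $q\in\del B_2$ while $q$ lies in the open ball $B_1$, hence $q\notin\del B_1$. Since both balls have the same radius $R_\Si(\la)$, the relation $q\in\del B_2\setminus\del B_1$ forces $B_1\neq B_2$ and in particular distinct centres. By \autoref{thm:global Blaschke} we have $K\sub\bar B_1\cap\bar B_2$, and this intersection of two balls $\bar B_{R_\Si(\la)}$ with different centres is exactly a nontrivial $\la$-lens, which completes the argument.

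I expect the only genuinely delicate point to be the topological step upgrading $\del K\sub\del B_1$ to $K=\bar B_1$; everything else is a direct reading of the supporting-ball property together with the containment from \autoref{thm:global Blaschke}. One should also take care that the touching point of a supporting ball genuinely lies on its boundary sphere, which is precisely what the rolling-freely formulation of $\la$-convexity guarantees for $\la\in I_\Si$.
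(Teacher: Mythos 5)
Your proposal is correct and follows essentially the same route as the paper: take supporting balls at boundary points, use \autoref{thm:global Blaschke} to enclose $K$ in each, and use nontriviality to produce two distinct supporting balls. The only difference is that you carefully justify the step ``if all supporting balls coincide with $\bar B_1$ then $K=\bar B_1$'' (via invariance of domain), which the paper's proof asserts without detail.
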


\begin{proof}
For $p\in \del K$, denote by $B^{p}$ the supporting ball with radius $R_{\Si}(\la)$ touching $\del K$ at $p$. As $K$ itself is not a ball of radius $R_{\Si}(\la)$ by nontriviality, there must exist $p\in\del K$ and $q\in \del K$, such that $B^{p}\neq B^{q}$ and hence $K$ lies in the nontrivial lens $\bar B^{p}\cap \bar B^{q}$.  
\end{proof}

\begin{lemma}
Let $L\sub \Si$ be a nontrivial $\la$-lens. Then $L\sub B_{\rho}(c)$, for some $c\in \Si$ and $\rho<R_{\Si}(\la)$. Here $B_{\rho}(c)$ is a geodesic ball of radius $\rho$ and center $c$.
\end{lemma}

\pf{
Let $L = \bar B_{R_{\Si}(\la)}(p)\cap \bar B_{R_{\Si}(\la)}(q)$ and define $c$ to be the center on the minimising geodesic $\ga\cn [0,1]\ra \Si$ from $p$ to $q$, i.e. $c = \ga(1/2)$. Fix $z\in L$, use geodesic polar coordinates centred at $z$ and write the metric of $\Si\bs\{z\}$ as $dr^{2} + \vt^{2}(r)\si$, where $\si$ is the round metric on the sphere and $\vt$ is as in \eqref{pf:ev eq}.
Define
\eq{\be(t) = \Th(r(\ga(t))),}
where $\Th'=\vt$. Then there holds
\eq{\ddot{\be} = \Th_{;ij}\dot{\ga}^{i}\dot{\ga}^{j} = \vt'\abs{\dot{\ga}}^{2}.}
For $\Si\in \{\bbE^{n+1},\bbH^{n+1}\}$ this immediately implies that $\be$ is strictly convex and hence attains its maxima on the boundary, i.e.
\eq{\be(t)< \max(\be(0),\be(1))\leq \Th(R_{\Si}(\la)).}
Hence the $\dist_{\Si}(z,c)<R_{\Si}(\la)$, while $z\in L$ was arbitrary.

On the sphere, the positivity of $\vt'$ is not guaranteed, as the lens might not lie in the hemisphere around $z$. We investigate this case separately. There holds
\eq{\ddot{\be} = -\abs{\dot{\ga}}^{2}\be}
and hence for $\tau :=\abs{\dot{\ga}}^{2}t\in[0,\dist_{\bbS^{n+1}}(p,q)]\sub [0,\pi[$,
\eq{\be(t) = a\cos \tau + b\sin\tau }
with suitable coefficients $a$ and $b$. We choose $\Th(r) = -\cos r$ and hence
\eq{\be(0)=\Th(r(p))\leq \Th(R_{\bbS^{n}}(\la))<0\q\mbox{and}\q \be(1)<\Theta(R_{\bbS^{n+1}}(\la))<0.}
We claim that $\be(t)<\Th(R_{\bbS^{n}}(\la))$ for all $t$. Otherwise $\be$ attains an interior maximum at $t_{0}$, at which there must hold 
\eq{-\abs{\dot{\ga}}^{2}\be(t_{0})=\ddot\be(t_{0})< 0,}
where the strict inequality follows from using $\dot\be(t_{0})=0$. Hence the function $\tau \mt a\cos\tau + b\sin\tau$ would have two zeros in $[0,\pi[$, which is not possible. The proof concludes as above.
}

By standard comparison with an enclosing sphere we obtain:

\begin{cor}\label{cor:strictly lambda convex}
Let $K\sub \Si$ be a non-trivial $\la$-convex body of class $C^{2}$, where $\la>0$. Then there exists $p\in \del K$, at which the smallest principal curvature is larger than $\la$. In particular, $\del K$ contains a strictly $\la$-convex hypersurface.
\end{cor}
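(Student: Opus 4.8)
The plan is to reduce everything to a single contact-point comparison with a bounding sphere, exactly as the sentence preceding the statement suggests. First I would record the strengthened enclosure for $\la\in I_{\Si}$: by the two lemmata above, a nontrivial $\la$-convex body $K$ lies in a nontrivial $\la$-lens, and that lens lies in some geodesic ball $B_{\rho}(c)$ with $\rho<R_{\Si}(\la)$. Since $K$ is compact, $x\mt\dist_{\Si}(x,c)$ attains a maximum $\rho_{0}\leq\rho$ on $\del K$ at some point $p$; then $K\sub\bar B_{\rho_{0}}(c)$ and the geodesic sphere $\del B_{\rho_{0}}(c)$ touches $\del K$ from outside at $p$, with $\rho_{0}<R_{\Si}(\la)$.

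Next I would run the curvature comparison at $p$. Both hypersurfaces have the same tangent space and the same inward unit normal there (the one pointing towards $c$), and $\del K$ lies on the interior side of the sphere. Writing the two surfaces locally as graphs over $T_{p}\del K$, this inclusion forces the second fundamental form of $\del K$ at $p$ to dominate that of $\del B_{\rho_{0}}(c)$ as a bilinear form, so every principal curvature of $\del K$ at $p$ is at least the constant sphere curvature $\ka_{\rho_{0}}$, which equals $1/\rho_{0}$, $\cot\rho_{0}$ and $\coth\rho_{0}$ in $\bbE^{n+1}$, $\bbS^{n+1}$ and $\bbH^{n+1}$ respectively. As $r\mt\ka_{r}$ is strictly decreasing and $\ka_{R_{\Si}(\la)}=\la$ by the very definition of $R_{\Si}(\la)$, the strict inequality $\rho_{0}<R_{\Si}(\la)$ gives $\ka_{\rho_{0}}>\la$, hence the smallest principal curvature of $\del K$ at $p$ exceeds $\la$.

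The only case not covered by the two lemmata is $\Si=\bbH^{n+1}$ with $0<\la\leq 1$, where $\la\notin I_{\Si}$ and no compact supporting ball is available; this is the point that needs separate care, although it turns out to be the easiest. Here I would simply use compactness of $K$ to pick any centre $c$ and a finite circumradius $\rho_{0}<\8$, obtaining a contact point $p$ as before; the bounding sphere now has curvature $\coth\rho_{0}>1\geq\la$, and the same comparison yields smallest principal curvature $>\la$ at $p$. In every case, since $\del K$ is of class $C^{2}$ the smallest principal curvature is continuous, so it remains $>\la$ on a neighbourhood of $p$, which by \autoref{defn:convexity} is the asserted strictly $\la$-convex hypersurface. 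I expect no deep obstacle: the work is entirely in the orientation bookkeeping of the contact comparison and in separating off the sub-unit hyperbolic curvatures, the genuine geometric input having already been isolated in the two enclosure lemmata.
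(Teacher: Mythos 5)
Your argument is exactly the paper's: the two enclosure lemmata give $K\sub B_{\rho}(c)$ with $\rho<R_{\Si}(\la)$, the "standard comparison with an enclosing sphere" at a farthest point supplies the strict curvature inequality, and the hyperbolic case $\la\leq 1$ is split off and handled with an arbitrary touching compact sphere, just as in the paper's remark following the corollary. The proposal is correct and merely writes out the details the paper leaves implicit.
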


Note that in the case $\la\leq 1$ and $\Si = \bbH^{n+1}$, we do not need the previous results for \autoref{cor:strictly lambda convex} to hold. Here it follows trivially by construction of a touching compact sphere.

Now we deal with the question about stability. For this purpose we make the quick observation that due to the rich symmetries of the spaceforms, we can (almost) always find a local Killing field with the desired property.

\begin{prop}\label{prop:strongly stable}
Let $M\sub\Si$ be a hypersurface of constant mean curvature, where in the hyperbolic space we assume that $M$ is not contained in a horosphere. Then there exists a point, such that a neighbourhood of $p$ in $M$ is strongly stable. In the Euclidean and spherical case any choice of $p$ works.
\end{prop}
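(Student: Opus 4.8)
The plan is to reduce the statement to \autoref{ElbertNelli}. That result says a constant mean curvature hypersurface on which some Killing field $X$ of $\Si$ satisfies $\ip{\nu}{X}>0$ is strongly stable. Hence it suffices to produce, for a suitable $p\in M$, a Killing field $X$ with $\ip{\nu}{X}(p)>0$: by continuity of $\nu$ and of $X$ the set $\cN:=\{\ip{\nu}{X}>0\}$ is then an open neighbourhood of $p$ in $M$, and applying \autoref{ElbertNelli} to $\cN$, which is a piece of the same constant mean curvature hypersurface, shows that $\cN$ is strongly stable. So the whole problem reduces to the pointwise construction of a Killing field whose support function $\ip{\nu}{X}$ is positive at a single point.

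First I would dispose of the Euclidean and spherical cases by realising $\Si$ in an ambient space. In $\bbE^{n+1}$ take the translation field $X\equiv\nu(p)$; in $\bbS^{n+1}\sub\bbR^{n+2}$ take $X(x)=\ip{p}{x}\nu(p)-\ip{\nu(p)}{x}p$, which is skew-symmetric and therefore Killing, and which satisfies $X(p)=\ip{p}{p}\nu(p)-\ip{\nu(p)}{p}p=\nu(p)$ since $\ip{p}{p}=1$ and $\nu(p)\perp p$. In both cases $\ip{\nu}{X}(p)=\abs{\nu(p)}^{2}=1>0$. As $p$ was arbitrary, any choice of $p$ works, which is exactly the claim in these two cases.

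The substantive case, and the one I expect to cause the most trouble, is the hyperbolic space. The clean underlying reason the construction always succeeds is that for every spaceform the evaluation $X\mt X(p)$ from Killing fields to $T_{p}\Si$ is onto: the isotropy group at $p$ acts transitively on unit directions, so every $v\in T_{p}\Si$ equals $X(p)$ for the transvection $X$ along the geodesic through $p$ with velocity $v$, and $v=\nu(p)$ gives $\ip{\nu}{X}(p)=1$. Concretely, in the hyperboloid model $\bbH^{n+1}\sub\bbR^{n+1,1}$ with Minkowski form $\ip{\cdot}{\cdot}_{L}$, the field $X(x)=\ip{\nu(p)}{x}_{L}\,p-\ip{p}{x}_{L}\,\nu(p)$ is skew with respect to $\ip{\cdot}{\cdot}_{L}$, so $X$ is tangent to $\bbH^{n+1}$ and generates a one-parameter group of isometries; using $\ip{p}{p}_{L}=-1$ and $\ip{\nu(p)}{p}_{L}=0$ one gets $X(p)=\ip{\nu(p)}{p}_{L}p-\ip{p}{p}_{L}\nu(p)=\nu(p)$, whence $\ip{\nu}{X}(p)=1>0$ and \autoref{ElbertNelli} applies on a neighbourhood.

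The last point I would want to nail down is the precise function of the horosphere hypothesis. The transvection above is positive at the chosen point irrespective of the geometry of $M$, so the mere existence of a strongly stable neighbourhood does not by itself force us to exclude horospheres; the hypothesis becomes relevant only if one insists on a single Killing field with a globally signed support function, which is the degenerate situation a horosphere -- the hyperbolic hypersurface all of whose principal curvatures equal $1$ -- is prone to. For \autoref{thm:main} this is harmless: in the hyperbolic regime where the stability argument is actually invoked one has $\la\geq 1$, and the relevant hypersurfaces are strictly $\la$-convex, hence have smallest principal curvature $>\la\geq 1$ and are strictly more curved than any horosphere, so they are never contained in one and the hypothesis holds automatically. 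Verifying that this is indeed all the exclusion is buying -- rather than some hidden obstruction in \autoref{ElbertNelli} -- is the step I would scrutinise most.
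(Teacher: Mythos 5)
Your proposal is correct, and for the Euclidean and spherical cases it coincides with the paper's proof (translation in direction $\nu_{p}$, respectively the rotation generated by the skew map built from $p$ and $\nu_{p}$, followed by continuity and \autoref{ElbertNelli}). In the hyperbolic case, however, you take a genuinely different route. The paper works in the upper half-space model and uses a \emph{horizontal translation} as the Killing field; the support function $\ip{\nu_{p}}{X}$ of such a field is positive only where $\nu_{p}$ has a nonzero horizontal component, which is why the paper must exclude the case that $\nu$ is vertical everywhere, i.e.\ that $M$ lies in a horosphere, and why it only asserts the existence of \emph{some} good point $p$. You instead use the transvection (boost) $X(x)=\ip{\nu(p)}{x}_{L}\,p-\ip{p}{x}_{L}\,\nu(p)$ in the hyperboloid model, which is skew for the Minkowski form, tangent to the hyperboloid, and satisfies $X(p)=\nu(p)$; this produces a Killing field with $\ip{\nu}{X}(p)=1$ at \emph{every} point of \emph{every} CMC hypersurface in $\bbH^{n+1}$. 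Your argument therefore proves a slightly stronger statement: the horosphere hypothesis is not needed for the existence of a strongly stable neighbourhood, and any $p$ works in all three spaceforms. Your closing diagnosis is also accurate: the exclusion in the paper is an artefact of the particular Killing field chosen there, not a hidden obstruction in \autoref{ElbertNelli}, and in the application to \autoref{prop:main} the hypersurfaces in question are strictly $\la$-convex with $\la\in I_{\bbH^{n+1}}$, hence never pieces of horospheres, so either version of the proposition suffices for the main theorem.
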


\pf{
Let $\nu$ be a unit normal filed of $M$ and let $p\in M$. In $\bbE^{n+1}$, the translation of $\bbE^{n+1}$ in direction $\nu_{p}$ gives rise to a Killing field $X$ with the property $X_{p} = \nu_{p}$. As $M$ is of class $C^{1}$, the property 
\eq{\ip{\nu}{X}>0}
persists in a neighbourhood of $p$, which is then strongly stable by  
\autoref{ElbertNelli}.
The argument in the sphere is similar, where we use a rotation in direction of the normal. In the hyperbolic space we use the upper half-space model $\bbH^{n+1} = \bbR^{n}\x (0,\8)$ and use the translation in the horizontal direction, which gives $\ip{\nu_{p}}{X}>0$, provided $\nu_{p}$ is not normal to the horizontal planes, which is precisely excluded to hold for all $p$ by demanding not to be tangent to a horosphere.
}

We are now ready to prove \autoref{thm:main}. The argument is of a quite classical variational nature as for example in \cite{BarbosaCarmo:/1984}. We prove that in a strictly $\la$-convex region we can increase the perimeter in a volume preserving fashion. This then concludes both statements of the theorem, after employing our supplementary arguments from before. 

\begin{prop}\label{prop:main}
Let $K\sub\Si$ be a $\la$-convex body with $\la\in I_{\Si}$, so that $\del K$ contains a strictly $\la$-convex $C^{2}$-hypersurface. Then there exists a $\la$-convex body $\hat K$ with the properties
\eq{\vol(K) = \vol(\hat K)\q\mbox{and}\q \Ar(\del \hat K)>\Ar(\del K).} 
\end{prop}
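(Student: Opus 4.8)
The plan is to work entirely inside the strictly $\la$-convex $C^{2}$-piece $\Om\sub\del K$ furnished by the hypothesis and to realise $\hat K$ as a small normal perturbation of $K$ whose support is a compact subset of $\Om$. The decisive structural feature is that on $\Om$ the smallest principal curvature satisfies $\ka_{1}>\la$ \emph{strictly}; since this is an open condition and the support of the variation is compact, every sufficiently $C^{2}$-small normal perturbation supported in $\Om$ again produces a $\la$-convex body. Indeed $\la$-convexity is a local (supporting-ball) condition, the boundary is left untouched outside the support, and on the modified piece the strict inequality $\ka_{1}>\la$ persists for small times. This openness is exactly what compensates for the non-openness of the global admissible class. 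Throughout I use the variational formulae of \autoref{prop:ev eq}, writing $v$ for the normal velocity and $a$ for the normal acceleration.

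First I would show that for any prescribed $f\in C^{\8}_{c}(\Om)$ with $\int_{M}f=0$ there is an \emph{exactly} volume-preserving variation with $v_{\phi\,|t=0}=f$. Fixing $g\in C^{\8}_{c}(\Om)$ with $\int_{M}g\neq 0$, I take a two-parameter variation $\psi(\cdot,t,s)$ with $\del_{t}\psi(\cdot,0,0)=f\nu$ and $\del_{s}\psi(\cdot,0,0)=g\nu$, so that the volume $V(t,s)=\vol(K_{t,s})$ has $\del_{s}V(0,0)=-\int_{M}g\neq 0$. The implicit function theorem then solves $V(t,s(t))=\vol(K)$ with $s(0)=0$ and, because $\del_{t}V(0,0)=-\int_{M}f=0$, also $s'(0)=0$; hence $\phi(\cdot,t):=\psi(\cdot,t,s(t))$ is supported in $\Om$ and has velocity exactly $f$. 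For any such variation the constraints $V'\equiv V''\equiv 0$ translate into $\int_{M}f=0$ and $\int_{M}a=\int_{M}f^{2}H$, which will be the only facts about the acceleration that I need. This is a now-standard construction in the spirit of \cite{BarbosaCarmo:/1984}.

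Now I would split according to whether the mean curvature is constant on $\Om$. If $H$ is non-constant, I choose $f$ with $\int_{M}f=0$ but $\int_{M}fH\neq 0$, which is possible precisely because $H$ is non-constant; the first area formula gives $\fr{d}{dt}_{|t=0}\Ar(M_{t})=-\int_{M}fH\neq 0$, so after fixing the sign of $t$ the body $\hat K:=K_{t}$ satisfies $\Ar(\del\hat K)>\Ar(\del K)$ and $\vol(\hat K)=\vol(K)$ for all small $t$. If instead $H$ is constant on $\Om$, then $\Om$ is a $C^{2}$ solution of the CMC equation, hence smooth by elliptic regularity, in particular of class $C^{3}$, so the second area formula of \autoref{prop:ev eq} applies. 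Since a strictly $\la$-convex piece cannot lie in a horosphere (in $\bbH^{n+1}$ one has $\ka_{1}>\la>1$), \autoref{prop:strongly stable} together with \autoref{ElbertNelli} provides a neighbourhood $U\sub\Om$ that is strongly stable. Picking $f\in C^{\8}_{c}(U)$, $f\not\equiv 0$, with $\int_{M}f=0$, the first variation of area vanishes, while feeding the two constraints above into the second area formula collapses it to the stability form,
\eq{\fr{d^{2}}{dt^{2}}_{|t=0}\Ar(M_{t})=-\int_{M}fT(f)>0,}
the positivity being the definition of strong stability, \autoref{defn:stability}. Thus $t=0$ is a strict local minimum of area along this volume-preserving path, and $\hat K:=K_{t}$ for any small $t\neq 0$ again has equal volume and strictly larger area.

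The main obstacle is not a single computation but the simultaneous enforcement of the three requirements—equal volume, larger area, and membership in the admissible class. Exact volume preservation is secured by the implicit-function construction; admissibility by the strict inequality $\ka_{1}>\la$ together with compact support, keeping $K_{t}$ genuinely $\la$-convex for small $t$; and the area gain is read off from the first variation when $H$ is non-constant and, in the harder constant-$H$ case, from strong stability via the second variation. The delicate point is this last case, where one must verify that the volume-preserving second variation reduces exactly to $-\int_{M}fT(f)$ and invoke the Killing-field criterion of \autoref{ElbertNelli} to guarantee its positivity on a genuine neighbourhood.
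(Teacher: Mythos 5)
Your overall strategy coincides with the paper's: a two--parameter family plus the implicit function theorem to enforce exact volume preservation, a case split on whether $H$ is constant on $\Om$, the first variation when $H$ is non-constant, and the reduction of the volume-constrained second variation to $-\int_{\Om}fT(f)$ combined with the Killing-field stability criterion (\autoref{ElbertNelli}, \autoref{prop:strongly stable}) when $H$ is constant. Your choice of $f$ with $\int f=0$, $\int fH\neq 0$ in the non-constant case is just a repackaging of the paper's two-bump functions $F,G$ supported where $H$ differs, and your observation that a strictly $\la$-convex piece in $\bbH^{n+1}$ cannot lie in a horosphere correctly discharges the hypothesis of \autoref{prop:strongly stable}.

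There is, however, one genuine gap, precisely at the point you yourself single out as the main obstacle: admissibility of $K_{t}$. You deform along $f\nu$, where $\nu$ is the unit normal of $\Om$. Since $\del K$ is only assumed $C^{2}$ in Case 1, $\nu$ is merely $C^{1}$, so $p\mapsto\exp_{p}(tf(p)\nu_{p})$ is only a $C^{1}$ family of hypersurfaces: the perturbed boundary has no classical second fundamental form, and your argument that ``the strict inequality $\ka_{1}>\la$ persists for small times'' cannot even be formulated, let alone deduced from $C^{1}$-smallness (a $C^{1}$-small but not $C^{1,1}$ perturbation can destroy every uniform supporting-ball condition). Your own sufficient criterion --- ``every sufficiently $C^{2}$-small normal perturbation ... produces a $\la$-convex body'' --- is correct, but the normal variation you construct is not $C^{2}$-small because it is not $C^{2}$. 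The paper avoids this by replacing $\nu$ with a fixed $C^{2}$ transversal field $\hat\nu$ (an extension of $\nu_{p_{0}}$ at a single point with $\ip{\hat\nu}{\nu}>0$ on a small $\Om$), so that $\psi(\cdot,t,s)$ retains the $C^{2}$ regularity of $\del K$ and the principal curvatures of $K_{t,s}$ converge uniformly to those of $K$ on the compact support; the variation is then no longer normal, which is harmless for the first variation formulae. In Case 2 your argument is fine as written, since constant $H$ plus elliptic regularity upgrades $\Om$ to $C^{\8}$ and the normal variation loses nothing --- exactly the remark the paper makes before switching to $\hat\nu=\nu$ there.
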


\begin{proof}

We will construct normal variations of the form
\eq{\phi\cn \del K\x J&\ra \Si}
with compact support in the strictly $\la$-convex  $C^{2}$-hypersurface $\Om\sub \del K$, where without loss of generality $\Om$ is so small, that there exists $p_{0}\in \Om$ with the property
\eq{\ip{\hat\nu}{\nu_{p}}>0\q \fa p\in\Om,}
where $\nu_{p}$ is the inward normal at $p$ and $\hat\nu$ is a $C^{2}$-extension of $\nu_{p_{0}}\in N_{p_{0}}\Om$ to a vector field $\hat\nu$ along $\Om$. 
 We distinguish two cases.

{\bf Case 1:} $H$ is not constant in $\Om$. Then pick open sets $\Om_1\sub\Om$ and $\Om_{2}\sub \Om$ with the property
\eq{
H_1:=\sup_{\Om_1}H<\inf_{\Om_2}H=:H_2.
}
Let $F\in C_{c}^{2}(\Om_{1})$ and $G\in C^{2}_{c}(\Om_{2})$ be positive with  
\eq{\int_\Omega F\ip{\hat\nu}{\nu} =\int_\Omega G\ip{\hat\nu}{\nu} =1.} 
We start with an auxiliary family of embeddings of $\del K$, 
\eq{\psi\cn \del K\x J_{1}\x J_{2}\ra \Si }
of the form
\eq{\psi(p,t,s) = \exp_{p}(tF(p)\hat\nu_{p} + sG(p)\hat\nu_{p}),}
where $\hat\nu$ is specified above.
For small $t$ and $s$, $\psi(\cdot,t,s)$ is an embedding of $\del K$ onto the boundary $\del K_{t,s}$ of a $\la$-convex body $K_{t,s}$, as can be checked locally from \autoref{defn:convexity} using a case distinction between points inside $\Om$ and outside $\Om$: Indeed, on points outside $\Om$ the variation does not change $\del K$ at all, so the local touching sphere condition from \autoref{defn:convexity}(i) is still valid outside $\Om$. On points $p\in\Om$ we are in the strictly $\la$-convex region, so for small $s$ and $t$ the image $\Om$ is strictly $\la$-convex by continuity, so the touching sphere condition is valid, as can be seen from a local Taylor expansion around $p$.

We compute
\eq{\del_{s|(t,s)=0}\vol(K_{t,s}) = -\int_{\Om}G\ip{\hat\nu}{\nu}<0,}
and hence by the implicit function theorem we are given a smooth function $b=b(t)$, such that
\eq{\label{pf:main 1}\vol(K_{t,b(t)}) = \vol(K).}
Hence the variation
\eq{\phi(p,t):= \psi(p,t,b(t))}
is volume preserving and we obtain
\eq{\del_{t}\phi(p,t) = (F(p) + b'(t)G(p))D\exp_{p}(tF(p)\hat\nu_{p}+b(t)G(p)\hat\nu_{p})\hat\nu_{p}.}
From \autoref{prop:ev eq} and the Gauss lemma for the exponential map we deduce
\eq{0=\del_{t}\vol(K_{t,b(t)}) = -\int_{\Om}F\ip{\hat\nu}{\ti N(\cdot,t)}_{p} - b'(t)\int_{\Om}G\ip{\hat\nu}{\ti N(\cdot,t)}_{p}, }
where $\ti N(p,t) = D\exp_{p}(tF(p)\hat\nu_{p} + b(t)G(p)\hat\nu_{p})^{-1}N(p,t)$ and $N(\cdot,t)$ is the inward unit normal of $\Om_{t,b(t)}$.
This implies
\eq{b'(t) = -\fr{\int_{\Om}F\ip{\hat\nu}{\ti N(\cdot,t)}_{p}}{\int_{\Om}G\ip{\hat\nu}{\ti N(\cdot,t)}_{p}}. }
 From \autoref{prop:ev eq} we obtain
\eq{\fr{d}{dt}\Ar(\del K_{t,b(t)}) = -\int_{\del K_{t,b(t)}}(F(p)+b'(t)G(p))\ip{\hat\nu}{\ti N(p,t)}_{p}H}
and hence, from the fact $b'(0)=-1$ we deduce
\eq{
\frac{d}{dt}_{|t=0}\Ar(\del K_{t,b(t)}) &= \int_\Omega H(G-F)\ip{\hat\nu}{\nu_{p}}\\
		&=\int_{\Om_2} GH\ip{\hat\nu}{\nu} - \int_{\Om_1}FH\ip{\hat\nu}{\nu}\\
			& > H_2 \int_{\Om_2} G\ip{\hat\nu}{\nu} - H_{1}\int_{\Om_1}F\ip{\hat\nu}{\nu}\\
			& >0,
}
so that we can increase the perimeter along this volume preserving deformation.

{\bf Case 2:} $H$ is constant in $\Om$. Then this first variation of perimeter vanishes at $t=0$, and we have to consider the second variation. Therefore it is desirable to work with normal variations, and hence we slightly modify the proof of Case 1, by simply using $\hat\nu=\nu$. Note that this does not cause a drop of regularity of $\psi$, because due to $H$ being constant, $\Om$ is of class $C^{\8}$ by standard elliptic regularity theory. To quickly see this, note that the constant mean curvature equation of a hypersurface in a space form always is of the form
\eq{a^{ij}(x,u,Du)D^{2}_{ij}u + b(x,u,Du) = H,}
see \cite[Equ.~(1.5.10)]{Gerhardt:/2006}, where $u$ is the local graph function representing the hypersurface. Since $u$ is assumed to be $C^{2}$, the functions $a^{ij}(x,u,Du)$ and $b(x,u,Du)$ are of class $C^{0,\al}$. Applying \cite[Thm.~6.17]{GilbargTrudinger:/2001}, we infer $u\in C^{2,\al}$ and bootstrapping then gives $u\in C^{\8}$. Note that the $\Om$ in that reference is not our $\Om$, but a Euclidean domain, over which our $\Om$ is represented by the graph function $u$.
 
 Now, redoing the above proof with $\hat\nu=\nu$, we see that from the constancy of $H$ we obtain that the first variation of perimeter vanishes. Write
\eq{v(p,t) = (F(p)+b'(t)G(p))\ip{\nu}{\ti N}_{p}.} From \autoref{prop:ev eq} we obtain
\eq{0 = \fr{d^{2}}{dt^{2}}_{|t=0}\vol(K_{t,b(t)}) = \int_{\Om}(v^{2}H - a), }
where $a(p,t) = \ip{D_{t}\del_{t}\phi(p,t)}{N(p,t)}$, which leads to
\eq{\fr{d^{2}}{dt^{2}}_{|t=0}\Ar(\del K_{t,b(t)}) = \int_{\Om}(-v T(v) + H^{2}v^{2} - aH) = -\int_{\Om}v T(v)>0,  }
if we choose $F$ with $\int_{\Om}F=0$ and $F$ not identically zero, which guarantees $b'(0) = 0$ and $v(p,0) = F(p)$. Hence we can increase the perimeter for small $t$.

\end{proof}

We can finalise the proof of the main result as follows:

\begin{proof}[Completion of the proof of \autoref{thm:main}]
(i) If $\del K$ is of class $C^{2}$, $\la$-convex and not a sphere of radius $R_{\Si}(\la)$, then our previous arguments show that $\del K$ contains a strictly $\la$-convex open $C^{2}$-hypersurface with $\la\in I_{\Si}$. \autoref{prop:main} then proves that $K$ can not have the largest perimeter amongst all $\la$-convex sets of the same volume.

(ii) If the smallest curvature was not equal $\la$ somewhere on the $C^{2}$-portion, it would have to be greater than $\la$ there, due to the $\la$-convexity of $K$. As in this case we assume $\la\in I_{\Si}$, we may employ \autoref{prop:main} to finalise the argument.
\end{proof}

\bibliographystyle{amsplain}
\providecommand{\bysame}{\leavevmode\hbox to3em{\hrulefill}\thinspace}
\providecommand{\MR}{\relax\ifhmode\unskip\space\fi MR }
\providecommand{\MRhref}[2]{%
  \href{http://www.ams.org/mathscinet-getitem?mr=#1}{#2}
}
\providecommand{\href}[2]{#2}

\end{document}